    \pgfplotsset{
        compat=1.3,
    }
    \pgfplotsset{compat=1.12}
\newtheorem*{prob*}{Problem}
\newtheorem{thm}{Theorem}[section]
\newtheorem{cor}[thm]{Corollary}
\newtheorem{prop}[thm]{Proposition}
\newtheorem{lem}[thm]{Lemma}
\newtheorem{quest}[thm]{Question}
\theoremstyle{definition}
\theoremstyle{remark}
\newtheorem{rem}[thm]{Remark}
\let\c@equation\c@thm
\newcommand{\Ra}{\mathcal{R}_a}
\begin{document}

\title{A Congruence Condition For The Four-Distance Problem}

\author{William McCloskey}

\begin{abstract}  Place the vertices of a rectangle at $\{(0, \pm 1/2), (a, \pm 1/2)\}$, where $a$ is rational. Any point $(x, y)$ that is rational distance from all four vertices of the rectangle must have rational coordinates. We prove that if $v_3(a) = 0,$ then either $v_3(x)<0$ or $v_3(y)<0$, where $v_3(\cdot)$ is the 3-adic valuation. The case of particular interest is the long-open four-distance problem, which asks whether such a rational distance point exists in the case $a=1$ of the unit square. For the four-distance problem, our result rules out one-fourth of all potential solutions with bounded height.
\end{abstract}

\maketitle

The four-distance problem is a long-open problem which asks whether there is a point in the plane at rational distance from the four corners of the unit square. It is believed that no such point exists. The problem and much of the relevant work is covered in detail in Section D19 of Richard Guy's \textit{Unsolved Problems in Number Theory}. Previous approaches to the four-distance problem include tiling the square with rational triangles (\cite{BremnerGuyLambda}, \cite{BremnerGuyNu}, \cite{GuyTiling}) and splitting the problem into two equations -- one involving distances to three corners of the square and the other involving the fourth distance (\cite{Berry}, \cite{berry_1992}). 

In this article, we split problem by considering separately two distances from one side of the square and two distances from the other side of the square. Specifically, we say that the two-distance problem is to find a point $(x,y)$ rational distance from both points $(0, \pm1/2)$. For the four-distance problem, we let $\Ra$ denote the vertices of the rectangle $(0,\pm 1/2), (a, \pm 1/2)$. We say $(x,y)$ is rational distance from the rectangle $\Ra$ if $(x,y)$ is rational distance from all points $(0,\pm 1/2), (a, \pm 1/2)$, and the four-distance problem for $\Ra$ is to find a point $(x,y)$ rational distance from $\Ra.$ This formulation is useful because the four-distance problem for the rectangle $\Ra$ is equivalent to finding solutions $(x,y),(x-a,y)$ to the two-distance problem.

In the first section, we prove some preliminaries for the two-distance problem. We present a polynomial $p_{x,z}(u)$ and prove that it has the following property: given rational numbers $x$ and $z$, the polynomial $p_{x,z}(u)$ has a nonzero rational root $u$ if and only if $(x,z/2)$ is a solution to the two-distance problem. Thus the four-distance problem for $\Ra$ is solvable if and only if $p_{x,z}(u)=0, p_{x-a,z}(\mu)=0$ has a rational solution $x,z,u,\mu$ with $u,\mu$ nonzero. 

In the second section, we consider the four-distance problem for the rectangle $\Ra$. We prove that if $v_3(a)=0$ then any point $(x,y)$ rational distance from $\Ra$ satisfies $v_3(x)<0$ or $v_3(y)<0$. (Any solution to the four-distance problem must have rational coordinates.) In particular, our result holds in the case of interest $a=1$, the unit square. Our result rules out one-fourth of the points that would be considered in a computer search of rational points with bounded height.

In the third section, we discuss possibilities of generalizing the argument for the cases where $v_3(x)<0$ or $v_3(y)<0$. Continuing to work with the system $p_{x,z}(u)=0, p_{x-a,z}(\mu)=0$ looks difficult, but there is some hope after translating the results to the equations for the three-distance problem and fourth distance studied by other papers.

\section{The Four-Distance Problem: Approach And Preliminaries}

\bigskip
\begin{center}
\begin{tikzpicture}[scale=.8]
\draw (0,0) -- (0,3) node[midway, left]{$T$};
\draw (0,3) -- (3,3);
\draw (3,3) -- (3,0);
\draw (0,0) -- (3,0);
\draw (0,0) -- (5,4) node[midway,left]{$U$};
\draw (3,0) -- (5,4) node[midway,right]{$Z$};
\draw (0,3) -- (5,4) node[midway,above]{$X$};
\draw (3,3) -- (5,4) node[midway,left]{$Y$};
\end{tikzpicture}
\end{center}

One of the main approaches to the four-distance problem is given by the following set of equations
\begin{align}
2(Y^4+T^4)+X^4+Z^4&=2(X^2+Z^2)(Y^2+T^2)  \\
U^2+Y^2&= X^2+Z^2.
\end{align} (See \cite{Berry} or \cite{GuyUnsolvedProblems}.) Equation (1) is equivalent to the three-distance problem: finding rational distances $X,Y,Z$ to the corners of a square with rational side length $T$. Given a rational solution to (1), equation (2) determines whether the fourth distance $U$ is also rational. For some time, it was believed there does not exist a solution to the three-distance problem that is not on a side of the square. Thus, these equations are quite logical: if the three-distance problem is already difficult, it makes sense to separate the four-distance problem into a three-distance problem and a one-distance problem. 

Moreover, substantial progress has been made in understanding the three-distance problem. In 1967, a one-parameter family of nontrivial solutions to the three-distance problem was discovered by J.H. Hunter. This family of solutions was rediscovered later by John Leech, John Conway, and Mike Guy, and Leech showed how to construct infinitely many one-parameter families from this one. In \cite{Berry}, T.G. Berry used the geometry of the surface (1) to generate even more one-parameter families from given ones, and these included the Hunter-Leech-Conway-Guy solutions as well as the families constructed by Leech.

In this paper, rather than narrowing down the problem with the three-distance problem and then checking the fourth distance, we look at a two-distance problem for the left side of the square and a two-distance problem for the right side of the square. We also work in the coordinate plane.

\begin{prob*}[Four-Distance Problem For A Rectangle] \label{fdp}
The four-distance problem for the rectangle of side lengths $1,a$ is to find a point in the plane rational distance from each corner of the rectangle. Place the vertices of the rectangle at $(0, \pm1/2), (a, \pm1/2).$ Then the problem is equivalent to finding a point $(x,y)$ that is rational distance from $(0, \pm 1/2)$ such that $(x-a,y)$ is also rational distance from $(0,\pm1/2).$ The four-distance problem is the case a=1.
\end{prob*}

\begin{figure}[!h]  
\centering 

\begin{tikzpicture}
\draw[help lines, color=gray!30, dashed] (-1.9,-1.5) grid (3,2.9);
\draw[->, thick] (-2,0)--(3,0) node[right]{$x$};
\draw[->, thick] (0,-1.5)--(0,3) node[above]{$y$};

\draw (0,.5)--(1.3,1.1) node[midway,above]{$R_1$} node[above]{$(x,y)$};
\draw (0,-.5)--(1.3,1.1) node[midway,right]{$R_2$};

\draw (0,.5)--(-1.9,1.1) node[midway,above]{$R_3$} node[above]{$(x-a,y)$};
\draw (0,-.5)--(-1.9,1.1) node[midway,left]{$R_4$};

\end{tikzpicture}
\begin{tikzpicture}
\draw[help lines, color=gray!30, dashed] (-.5,-1.5) grid (4.9,2.9);
\draw[->, thick] (-.5,0)--(5,0) node[right]{$x$};
\draw[->, thick] (0,-1.5)--(0,3) node[above]{$y$};
\draw (0,.5)--(1.3,1.1) node[midway,above]{$R_1$} node[above]{$(x,y)$};
\draw (0,-.5)--(1.3,1.1) node[midway,right]{$R_2$};
\draw (3.3,.5)--(1.3,1.1) node[midway,above]{$R_3$};
\draw (3.3,-.5)--(1.3,1.1) node[midway,left]{$R_4$};
\draw (0,.5)--(0,-.5);
\draw (0,.5)--(3.3,.5);
\draw (3.3,.5)--(3.3,-.5); 
\draw (3.3,-.5)--(0,-.5)  node[midway, below]{$a$};
\end{tikzpicture}
 \label{fig:L}  
\end{figure}  

As in the introduction, we will denote the set $\{(0, \pm 1/2), (a, \pm 1/2)\}$ by $\Ra$.\footnote{This is the same notation as in Bremner and Ulas's paper \cite{BremnerUlas}, except our rectangle is translated downwards by $1/2$.} And we will say that $(x,y)$ is rational distance from $\Ra$ if $(x,y)$ is rational distance from all of the elements of $\Ra$.

For our formulation of the four-distance problem for a rectangle, we want to study the points $(x,y)$ that are rational distance from $(0, \pm1/2).$ We will refer to this as the two-distance problem.

\begin{prob*}[Two-Distance Problem] The two-distance problem is to find a point $(x,y)$ rational distance from $(0, \pm 1/2).$

\end{prob*} Solutions to the two-distance problem have the following restriction.

\begin{lem} \label{lem:yrational}

If $(x, y)$ is a solution to the two-distance problem, then $y$ is rational. \end{lem}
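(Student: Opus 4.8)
The plan is to eliminate the unknown $x$-coordinate by taking a difference of squared distances. Let $r$ and $s$ denote the two rational distances guaranteed by hypothesis, namely $r = \sqrt{x^2 + (y-1/2)^2}$ (the distance from $(x,y)$ to $(0,1/2)$) and $s = \sqrt{x^2 + (y+1/2)^2}$ (the distance to $(0,-1/2)$). Squaring gives $r^2 = x^2 + (y-1/2)^2$ and $s^2 = x^2 + (y+1/2)^2$. The key observation is that both right-hand sides contain the identical block $x^2 + y^2 + 1/4$, and differ only in the linear term $\mp y$.

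First I would subtract the two squared-distance equations. The $x^2$ terms cancel, and so do the $y^2$ and constant terms, so that $s^2 - r^2 = (y+1/2)^2 - (y-1/2)^2$. Expanding (or factoring as a difference of squares) yields $s^2 - r^2 = 2y$, hence $y = (s^2 - r^2)/2$. This is the entire content of the argument: the subtraction is precisely what removes all dependence on $x$.

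Finally, since $r$ and $s$ are rational by the definition of a solution to the two-distance problem, both $r^2$ and $s^2$ are rational, so $s^2 - r^2$ is rational and therefore $y = (s^2 - r^2)/2$ is rational, as claimed. There is no genuine obstacle to overcome here; the only point worth emphasizing is that the conclusion requires nothing whatsoever about $x$, which may well be irrational at this stage. The rationality of $x$ (and the sharper valuation statements) is exactly what the later sections must work to establish, but the rationality of $y$ falls out immediately from this one-line elimination.
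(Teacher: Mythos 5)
Your argument is correct and is essentially identical to the paper's own proof: subtracting the two squared-distance equations cancels $x^2$ (and the quadratic and constant terms in $y$), leaving $y = (s^2 - r^2)/2$, which is rational since $r$ and $s$ are. Nothing is missing, and your closing remark that $x$ need not be rational at this stage correctly matches the paper's structure, where rationality of $x$ is only obtained later (for points at rational distance from the full rectangle).
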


\begin{proof}
By assumption, for some rational numbers $R_1$ and $R_2$, we have\[x^2 + (y - 1/2)^2 = R_1^2\]
\[x^2 + (y + 1/2)^2 = R_2^2.\] Subtracting the top equation from the bottom one, we see that
\[y = (R_2^2 - R_1^2)/2,\] so $y$ is rational.

\end{proof}

\begin{cor} \label{cor:ratcoords}
Suppose that $a$ is rational. If $(x,y)$ is rational distance from $\Ra$, then $x$ and $y$ are rational.
\end{cor}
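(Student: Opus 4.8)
The plan is to leverage Lemma~\ref{lem:yrational}, which already handles the $y$-coordinate, so that the only remaining work is to establish rationality of $x$. First I would observe that since $(x,y)$ is rational distance from $\Ra$, it is in particular rational distance from $(0,\pm 1/2)$, so the two-distance problem applies and Lemma~\ref{lem:yrational} immediately gives $y \in \Q$. The remaining task is to extract the rationality of $x$ from the fact that $(x,y)$ is also rational distance from the two right-hand vertices $(a,\pm 1/2)$.

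Next I would write down the defining distance equations to the corners at $x$-coordinate $0$ and those at $x$-coordinate $a$. From being rational distance from $(0,1/2)$ we have $x^2 + (y-1/2)^2 = R_1^2$ for some rational $R_1$, and from being rational distance from $(a,1/2)$ we have $(x-a)^2 + (y-1/2)^2 = R_3^2$ for some rational $R_3$. Subtracting these two equations eliminates the quadratic term $(y-1/2)^2$ and yields
\[
x^2 - (x-a)^2 = R_1^2 - R_3^2,
\]
which simplifies to $2ax - a^2 = R_1^2 - R_3^2$. The key structural point is that the $x^2$ terms cancel, leaving a \emph{linear} relation in $x$.

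Solving this linear relation gives $x = (R_1^2 - R_3^2 + a^2)/(2a)$. Since $a$ is rational and nonzero (the rectangle is nondegenerate), and $R_1, R_3$ are rational by hypothesis, the right-hand side is a ratio of rationals with nonzero denominator, hence $x \in \Q$. Combined with $y \in \Q$ from the first step, this establishes that both coordinates are rational, completing the proof.

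I expect no serious obstacle here: the argument is a direct mirror of Lemma~\ref{lem:yrational}, using a horizontal pair of vertices in place of the vertical pair. The only point requiring a word of care is ensuring $a \neq 0$ so that the division is legitimate; this is automatic since $\Ra$ is a genuine rectangle. It is worth noting that this is exactly why the problem is posed for a rectangle rather than a single vertical edge: the two distances to $(0,\pm 1/2)$ alone pin down $y$ but leave $x$ free, and it is precisely the third (or fourth) vertex at a different $x$-coordinate that forces $x$ to be rational.
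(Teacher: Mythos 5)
Your proof is correct and is essentially the argument the paper intends: the paper states the corollary without proof, immediately after Lemma~\ref{lem:yrational}, and your subtraction of the distance equations to $(0,1/2)$ and $(a,1/2)$ is exactly the horizontal analogue of the lemma's vertical subtraction trick. Your explicit remark that $a \neq 0$ is needed for the division is a genuine (if minor) point of care the paper leaves implicit --- indeed the statement fails for $a=0$, e.g.\ the point $(\sqrt{3}/2,\,0)$ is at distance $1$ from both of $(0,\pm 1/2)$.
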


Because of the preceding corollary,  we are mainly interested in studying points $(x,y)$ with rational coordinates. To do so, we will make great use of the polynomial \[p_{x,z}(u) := u^4-(z^2+4x^2+1)u^2+z^2.\]

\begin{thm} \label{phzu}
Let P = $(x, z/2)$ be a fixed point in the plane. Let $R_1$ be the distance of $P$ to $(0, 1/2)$ and let $R_2$ be the distance of $P$ to $(0, -1/2)$. Then the four roots of the quartic $p_{x,z}(u)$ are $R_1\pm R_2$ and $-(R_1 \pm R_2)$.
\end{thm}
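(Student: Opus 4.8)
The plan is to verify the claimed factorization directly. Since $p_{x,z}(u)$ involves only even powers of $u$, it is a biquadratic; its roots therefore occur in pairs $\pm r$, so it suffices to produce the factorization
\[
p_{x,z}(u) = \bigl(u^2 - (R_1+R_2)^2\bigr)\bigl(u^2 - (R_1-R_2)^2\bigr),
\]
from which the four roots $\pm(R_1+R_2)$ and $\pm(R_1-R_2)$ can be read off.

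First I would record the two distances explicitly. Since $P = (x, z/2)$, the definitions of $R_1$ and $R_2$ as distances to $(0,1/2)$ and $(0,-1/2)$ give
\[
R_1^2 = x^2 + \tfrac{1}{4}(z-1)^2, \qquad R_2^2 = x^2 + \tfrac{1}{4}(z+1)^2.
\]
Next I would expand the candidate product as
\[
u^4 - \bigl[(R_1+R_2)^2 + (R_1-R_2)^2\bigr]u^2 + (R_1+R_2)^2(R_1-R_2)^2,
\]
and simplify its two nontrivial coefficients using symmetric combinations that avoid the cross term $R_1R_2$ (which would carry square roots). The coefficient of $u^2$ is $(R_1+R_2)^2 + (R_1-R_2)^2 = 2R_1^2 + 2R_2^2$, and the constant term is $(R_1+R_2)^2(R_1-R_2)^2 = (R_1^2 - R_2^2)^2$.

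Finally I would substitute the explicit values. A short computation gives $2R_1^2 + 2R_2^2 = 4x^2 + z^2 + 1$ and $R_1^2 - R_2^2 = -z$, whence $(R_1^2-R_2^2)^2 = z^2$. Thus the expanded product equals $u^4 - (z^2 + 4x^2 + 1)u^2 + z^2 = p_{x,z}(u)$, which completes the proof. There is no genuine obstacle beyond careful bookkeeping with the $\pm 1/2$ shifts in the $y$-coordinate; the one point worth emphasizing is that routing the computation through the symmetric functions $2R_1^2+2R_2^2$ and $R_1^2-R_2^2$, rather than expanding $(R_1\pm R_2)^2$ termwise, keeps every intermediate quantity rational in $x$ and $z$ and avoids the irrational cross terms entirely.
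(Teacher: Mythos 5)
Your proof is correct, and the core computation is the same as the paper's: both reduce the statement to the two rational symmetric quantities $(R_1+R_2)^2+(R_1-R_2)^2 = 2(R_1^2+R_2^2) = z^2+4x^2+1$ and $(R_1+R_2)^2(R_1-R_2)^2 = (R_1^2-R_2^2)^2 = z^2$ (the paper obtains the latter via $y = (R_2^2-R_1^2)/2$ from Lemma \ref{lem:yrational}). The packaging differs in one useful way. The paper substitutes $u = R_1+\epsilon R_2$, $\epsilon = \pm 1$, checks that $p_{x,z}$ vanishes, and then must argue separately that these values exhaust the roots: it notes the four values are distinct except when $R_1 = R_2$ (i.e.\ $z=0$), in which case it verifies by hand that $p_{x,z}(u) = u^2\bigl(u^2-(4x^2+1)\bigr)$, so $0$ is genuinely a double root. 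You instead establish the identity
\[
p_{x,z}(u) = \bigl(u^2-(R_1+R_2)^2\bigr)\bigl(u^2-(R_1-R_2)^2\bigr),
\]
which delivers all four roots \emph{with multiplicity} in one stroke, so the degenerate coincidences ($R_1=R_2$, or $R_1R_2=0$, where the values $\pm(R_1\pm R_2)$ collide) require no separate case analysis. Your version is thus marginally cleaner on the multiplicity bookkeeping, while the paper's plug-in verification involves slightly less expansion; the underlying arithmetic is identical.
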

\begin{proof}
First, let us check that $R_1\pm R_2$ and $-(R_1 \pm R_2)$ are indeed roots. Since we are squaring $u$, we only need to plug in $u=R_1+\epsilon R_2$, where $\epsilon=\pm1$. 

Writing $y=z/2$, the coefficient \begin{align*}
    z^2+4x^2+1 &= 4y^2+4(R_2^2-(y+1/2)^2)+1 \\
    &=4(R_2^2-y).
\end{align*} In the proof of Lemma \ref{lem:yrational}, we showed $y=(R_2^2-R_1^2)/2$. So we find
\[z^2+4x^2+1=2(R_1^2+R_2^2).\] Thus we get $p_{x,z}(R_1+\epsilon R_2$) is \[(R_1+\epsilon R_2)^4-2(R_1^2+R_2^2)(R_1+\epsilon R_2)^2+(R_2^2-R_1^2)^2,\] which is 0.

All four roots are distinct except in the special  case $R_1 = R_2$, where $z/2 = 0$. Here the polynomial $p_{x,z}(u)$ is just $u^2(u^2-(4x^2+1))$, so $0=R_1 - R_2 = R_2-R_1$ is indeed a double root. 
\end{proof}

\begin{cor} \label{cor:ratdistpoly}
A point with rational coordinates $(x,z/2)$ is a solution to the two-distance problem if and only if $p_{x,z}(u)$ has a nonzero rational root.
\end{cor}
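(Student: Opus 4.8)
The plan is to read off both directions directly from Theorem \ref{phzu}, which identifies the four roots of $p_{x,z}(u)$ as $R_1 \pm R_2$ and $-(R_1 \pm R_2)$, where $R_1, R_2$ are the distances from $P = (x, z/2)$ to $(0, \pm 1/2)$. Two facts will do all the work. First, since $(x, z/2)$ has rational coordinates, $z$ is rational. Second, recall from the proof of Lemma \ref{lem:yrational} that $y = (R_2^2 - R_1^2)/2$, which with $y = z/2$ gives the identity $R_1^2 - R_2^2 = -z$, equivalently $(R_1 + R_2)(R_1 - R_2) = -z$.

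For the forward direction, suppose $(x, z/2)$ solves the two-distance problem, so that $R_1$ and $R_2$ are both rational. Then $R_1 + R_2$ is rational, and it is nonzero: the points $(0, 1/2)$ and $(0, -1/2)$ are distinct, so $P$ cannot be at distance zero from both, forcing $R_1 + R_2 > 0$. Hence $R_1 + R_2$ is a nonzero rational root of $p_{x,z}(u)$.

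For the converse, suppose $p_{x,z}(u)$ has a nonzero rational root $u_0$. By Theorem \ref{phzu} we have $u_0 = \pm(R_1 + R_2)$ or $u_0 = \pm(R_1 - R_2)$, so that one of the combinations $R_1 + R_2$, $R_1 - R_2$ equals $\pm u_0$ and is therefore a nonzero rational number. The key step is to recover the other combination from the identity $(R_1 + R_2)(R_1 - R_2) = -z$: dividing the rational number $-z$ by the nonzero rational combination already in hand yields the remaining combination as a rational number as well. With both $R_1 + R_2$ and $R_1 - R_2$ rational, the half-sum $R_1$ and half-difference $R_2$ are rational, so $(x, z/2)$ solves the two-distance problem.

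The only point requiring care is this recovery of both distances from a single root, and it is precisely where the nonzero hypothesis is essential: the product identity lets us divide by the one combination we know only because it does not vanish. The degenerate case $z = 0$ (where $R_1 = R_2$, so the root $R_1 - R_2$ collapses to $0$) is not an exception, since there the nonzero root must be $\pm(R_1 + R_2)$ and $R_1 = R_2 = (R_1 + R_2)/2$ is then rational, matching the general argument.
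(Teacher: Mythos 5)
Your proof is correct and follows essentially the same route as the paper: the forward direction takes $R_1+R_2$ as the nonzero rational root, and the converse recovers the second combination by dividing the rational quantity $R_1^2-R_2^2$ (which you identify explicitly as $-z$) by the nonzero rational combination in hand. The paper states this more tersely---noting only that $R_1,R_2$ are square roots of rationals---and your write-up simply makes the same conjugate-division mechanism and the $z=0$ degenerate case explicit.
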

\begin{proof}
If $(x,z/2)$ is a solution to the two-distance problem, then $R_1+R_2$ is a nonzero rational root. For the converse, note that $R_1$ and $R_2$ are square roots of rational numbers because $x,z/2$ are rational. This means that if any of $R_1\pm R_2$ are nonzero and rational, then $R_1$ and $R_2$ are rational.
\end{proof}

Combining Corollary \ref{cor:ratcoords} and Corollary \ref{cor:ratdistpoly}, we get a reformulation of the four-distance problem.

\begin{prop} \label{prop:fdpsyst}
Suppose that $a$ is rational. A point $(x,z/2)$ is at rational distance from $\Ra$ if and only if $x, z$ are rational and $p_{x,z}(u)$ and $p_{x-a,z}(\mu)$ have rational roots that are nonzero. This means finding a solution $x,z,u,\mu$ in rationals ($u,\mu \neq 0)$ to the following system of equations
\begin{align*}
u^4-(z^2+4x^2+1)u^2+z^2&=0 \\
\mu^4-(z^2+4(x-a)^2+1)\mu^2+z^2&=0.
\end{align*}
The four-distance problem is the case $a=1$.
\end{prop}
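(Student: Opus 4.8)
The plan is to assemble the statement directly from the two corollaries, using the translation observation already recorded in the problem statement: the distance from $(x,z/2)$ to $(a,\pm 1/2)$ equals the distance from the shifted point $(x-a,z/2)$ to $(0,\pm 1/2)$. This identity reduces ``rational distance from $\Ra$'' to two independent instances of the two-distance problem, one for $(x,z/2)$ and one for $(x-a,z/2)$, and Corollary \ref{cor:ratdistpoly} then converts each instance into the existence of a nonzero rational root of $p_{x,z}$ and of $p_{x-a,z}$ respectively.

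First I would treat the forward direction. Assuming $(x,z/2)$ is at rational distance from $\Ra$, Corollary \ref{cor:ratcoords} immediately gives that $x$ and $z/2$, hence $z$, are rational. Since being at rational distance from $\Ra$ includes being at rational distance from $(0,\pm 1/2)$, the point $(x,z/2)$ solves the two-distance problem, so Corollary \ref{cor:ratdistpoly} furnishes a nonzero rational root of $p_{x,z}(u)$. For the remaining two vertices I would invoke the translation identity: the distances from $(x,z/2)$ to $(a,\pm 1/2)$ coincide with the distances from $(x-a,z/2)$ to $(0,\pm 1/2)$, so $(x-a,z/2)$ also solves the two-distance problem, and applying Corollary \ref{cor:ratdistpoly} to this shifted point yields a nonzero rational root $\mu$ of the associated polynomial, which is exactly $p_{x-a,z}$.

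For the converse I would run the same chain in reverse: given rational $x,z$ together with nonzero rational roots of $p_{x,z}$ and $p_{x-a,z}$, Corollary \ref{cor:ratdistpoly} makes both $(x,z/2)$ and $(x-a,z/2)$ solutions of the two-distance problem, and the translation identity turns the latter into rational distance from $(a,\pm 1/2)$; hence $(x,z/2)$ is at rational distance from all four vertices of $\Ra$. The displayed two-equation system is then merely the two root conditions written out, and the case $a=1$ is the unit-square four-distance problem. There is no serious obstacle here once the corollaries are in hand; the only point requiring care is verifying that $p_{x-a,z}$ really is the polynomial attached to the point $(x-a,z/2)$. This is transparent because $x$ enters $p_{x,z}(u)=u^4-(z^2+4x^2+1)u^2+z^2$ only through the term $4x^2$, so the substitution $x\mapsto x-a$ produces precisely the polynomial whose roots encode the two distances from $(x-a,z/2)$ to $(0,\pm 1/2)$.
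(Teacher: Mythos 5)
Your proposal is correct and matches the paper's own (implicit) argument: the paper obtains this proposition exactly by combining Corollary \ref{cor:ratcoords} with two applications of Corollary \ref{cor:ratdistpoly}, to $(x,z/2)$ and to the translated point $(x-a,z/2)$, using the same translation equivalence stated in the problem formulation. Your extra check that $p_{x-a,z}$ is the polynomial attached to the shifted point is a sound, if routine, verification.
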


A congruence condition for this system of equations is the main result of the next section.

\section{A Congruence Condition For The Four-Distance Problem}

In this section, we show that the system of equations in Proposition \ref{prop:fdpsyst} has no rational solutions with $v_3(x) \geq 0$ and $v_3(z) \geq 0$ if $v_3(a)=0$. Note that this holds in the case $a=1$, the four-distance problem. 

For a given prime $p$, the $p$-adic valuation $v_p$ is defined as usual: any nonzero rational number $t$ can be written uniquely as $t= p^k \frac{r}{s}$ where $r$ and $s$ are coprime integers not divisible by $p$, and we define $v_p(t) = k$. And $v_p(0)$ is defined to be $\infty$. The $p$-adic valuation satisfies the property that $v_p(s+t) \geq \min\{v_p(s),v_p(t)\}$, with equality unless $v_p(s)=v_p(t).$
\begin{prop} \label{prop:v3hv3z<0}
Suppose that $(x,z/2)$ is rational distance from $\Ra$, where $a$ is rational with $v_3(a) = 0$. Then $v_3(x)<0$ or $v_3(z)<0$.
\end{prop}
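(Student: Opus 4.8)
The plan is to argue by contradiction: assume $v_3(x)\ge 0$ and $v_3(z)\ge 0$ and derive $v_3(a)\ge 1$, contradicting the hypothesis $v_3(a)=0$. By Corollary \ref{cor:ratcoords} the coordinates $x,z$ are rational, and unwinding the meaning of ``rational distance from $\Ra$'' (equivalently, reading off the rational roots $R_1\pm R_2$ of $p_{x,z}$ and $p_{x-a,z}$ guaranteed by Theorem \ref{phzu} and Corollary \ref{cor:ratdistpoly}) produces four rational numbers $R_1,R_2,R_3,R_4$ with
\[
4x^2+(z-1)^2=(2R_1)^2,\qquad 4x^2+(z+1)^2=(2R_2)^2,
\]
\[
4(x-a)^2+(z-1)^2=(2R_3)^2,\qquad 4(x-a)^2+(z+1)^2=(2R_4)^2.
\]
So all four left-hand sides are rational squares, two attached to the side $x$ of the rectangle and two to the side $x-a$.

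The local input I would use is that $2$ is a non-residue modulo $3$. Under the standing assumption $x,z\in\Z_3$, each left-hand side above lies in $\Z_3$, hence so does each $2R_i$; and any rational square that is a $3$-adic unit is $\equiv 1\pmod 3$. Consequently none of the four left-hand sides can simultaneously have valuation $0$ and be congruent to $2\pmod 3$.

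The main step exploits that $z-1$ and $z+1$ differ by $2$, so they cannot both be divisible by $3$; at least one of them is a $3$-adic unit. Choosing the equation on the $x$-side whose constant is that unit, say $4x^2+(z+\varepsilon)^2=(2R)^2$ with $3\nmid(z+\varepsilon)$, I would reduce modulo $3$: if $3\nmid x$ then $4x^2+(z+\varepsilon)^2\equiv 1+1\equiv 2\pmod 3$ and this quantity has valuation $0$, contradicting that it is a square. Hence $3\mid x$. Running the identical argument on the two equations involving $x-a$ (again at least one of $z\pm 1$ is a unit) yields $3\mid(x-a)$.

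The conclusion is then immediate: $3\mid x-(x-a)=a$, so $v_3(a)\ge 1$, contradicting $v_3(a)=0$; therefore $v_3(x)<0$ or $v_3(z)<0$. I do not expect a serious obstacle here. The only points needing care are verifying $3$-adic integrality so that the reductions modulo $3$ are legitimate, and recognizing the one fact that powers the whole argument: $v_3(a)=0$ is exactly what makes the two forced conclusions $3\mid x$ and $3\mid(x-a)$ incompatible. It is worth emphasizing that this is special to the prime $3$, where the coincidence that $2$ is a quadratic non-residue together with $z-1,z+1$ being guaranteed to hit a unit is what closes every case uniformly, without any case split on $z\bmod 3$.
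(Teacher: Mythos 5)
Your proof is correct, but it is genuinely different from --- and considerably more direct than --- the paper's argument. The paper works with the quartic system of Proposition \ref{prop:fdpsyst} and runs an induction modulo $3^k$: it first shows $z^2\equiv 0\pmod{3^k}$ for every $k$, hence $z=0$; it then disposes of the remaining points $(x,0)$ by a geometric shift-and-rotate trick (Figure \ref{fig:shiftrotate}) reducing to the rectangle $\mathcal{R}_{1/a}$, which forces $x=a/2$; and it finally eliminates $(a/2,0)$ using the fact that exactly one leg of a primitive Pythagorean triple is divisible by $3$. You instead bypass the polynomial $p_{x,z}$ entirely and reduce the four distance equations $4x^2+(z\mp 1)^2=(2R_{1,2})^2$, $4(x-a)^2+(z\mp 1)^2=(2R_{3,4})^2$ modulo $3$ once: since $z-1$ and $z+1$ differ by $2$, at least one is a $3$-adic unit, and for that choice of sign a unit $x$ (resp.\ $x-a$) would make the left side $\equiv 1+1\equiv 2\pmod 3$, a unit that cannot be a rational square; so $3\mid x$ and $3\mid(x-a)$, whence $3\mid a$, contradicting $v_3(a)=0$. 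Each step checks out (the $3$-adic integrality of the $2R_i$, the fact that a rational square which is a $3$-adic unit is $\equiv 1\pmod 3$, and the availability of a unit among $z\pm 1$ in every residue class of $z$), and your argument is even consistent with the paper's three-distance counterexample $(6493/28900,12463/14450)$, where $z\equiv 1\pmod 3$ and only the $(z-1)$-equation exists on one side, so no constraint on $x$ arises --- confirming that the pairing of both equations per side is exactly what your proof exploits. What the paper's longer route buys is extra structural information reused later: the induction establishes that all $3$-adic integral solutions satisfy $z\equiv 0\pmod{3^k}$ for all $k$, a fact invoked in Section 3's discussion of derivative matrices and Hensel-type lifting, whereas your one-shot congruence, while a cleaner proof of the proposition itself, does not yield that refinement.
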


\begin{proof}
By Proposition \ref{prop:fdpsyst}, any point $(x,z/2)$ rational distance from $\Ra$ gives a solution in rationals ($u,\mu \neq 0$) to the system
\begin{align*}
u^4-(z^2+4x^2+1)u^2+z^2&=0 \\
\mu^4-(z^2+4(x-a)^2+1)\mu^2+z^2&=0.
\end{align*}
Proceeding by contradiction, suppose that $v_3(x)\geq 0$ and $v_3(z) \geq 0$. Then also $v_3(u)\geq 0$ and $v_3(\mu) \geq 0,$ so these equations can be taken (mod $3^k$). The first step is to show $z^2 \equiv 0 \pmod{3^k}$ for all $k$, which will imply that $z$ = 0.

We start by showing $z^2 \equiv 0 \pmod{3}$. Suppose on the contrary that $z^2 \equiv 1 \pmod{3}$. Since $u^4 \equiv u^2 \pmod{3}$ and $\mu^4 \equiv \mu^2 \pmod{3}$, we get the system \begin{eqnarray*}
-(1+4x^2)u^2+1&\equiv&0 \pmod{3}\\
-(1+4(x-a)^2)\mu^2+1&\equiv&0 \pmod{3}
\end{eqnarray*}

We see neither $u^2$ nor $\mu^2$ can be 0 (mod 3), so they must both be 1 (mod 3). We are left with the equations $-4x^2 \equiv 0 \pmod{3}$ and $-4(x-a)^2 \equiv 0 \pmod{3}$. By assumption $a$ is nonzero (mod 3), so these equations have no solution. Therefore we must have had $z^2 \equiv 0 \pmod{3}$.

For the induction step, suppose that $z^2 \equiv 0 \pmod{3^k}$. Then we get \begin{eqnarray*}
u^2(u^2-(4x^2+1))&\equiv&0 \pmod{3^k}\\
\mu^2(\mu^2-(4(x-a)^2+1))&\equiv&0 \pmod{3^k}.
\end{eqnarray*}
Notice that either $x \not\equiv 0 \pmod{3}$ or $x \not\equiv a \pmod{3}$. In the first case, $u^2 -(4x^2+1)$ is nonzero (mod 3), so we get $u^2\equiv0 \text{ (mod } 3^k).$ Similarly, in the second case we get $\mu^2 \equiv 0 \text{ (mod } 3^k).$ 

Consider the equations (mod $3^{k+1}$). We either have $z^2 \equiv 0 \pmod{3^{k+1}}$ or $z^2 \equiv 3^k \pmod{3^{k+1}}$. For the sake of contradiction, suppose the latter. Then \begin{eqnarray*}
u^4-(3^k+4x^2+1)u^2+3^k&\equiv&0 \pmod{3^{k+1}} \\
\mu^4-(3^k+4(x-a)^2+1)\mu^2+3^k&\equiv&0 \pmod{3^{k+1}}.
\end{eqnarray*}
In the case $x \not\equiv 0 \pmod{3}$, then we saw before $u^2 \equiv 0 \pmod{3^{k}}$. The only possibility is $u^2 \equiv 3^k \pmod{3^{k+1}}$. The first equation becomes $$-(3^k +4x^2+1)3^k + 3^k \equiv 0 \pmod{3^{k+1}}$$ so $$-(4x^2+1)+1\equiv 0 \pmod 3.$$
Thus $x \equiv 0 \pmod{3}$, contrary to the assumption $x \not \equiv 0 \pmod{3}$. We get an analogous contradiction if we assume $x \not \equiv a \pmod{3}$.

This means $z^2 \equiv 0 \pmod{3^{k+1}}$. By induction, $z = 0.$

\begin{figure}[!h]  
\centering 
\begin{tikzpicture}
\draw[help lines, color=gray!30, dashed] (-.5,-1.5) grid (4.9,2.9);
\draw[->, thick] (-.5,0)--(5,0) node[right]{$x$};
\draw[->, thick] (0,-1.5)--(0,3) node[above]{$y$};
\draw (0,.5)--(4.1,0) node[above]{$(x,0)$};
\draw (0,-.5)--(4.1,0); 
\draw (1.65,-.5)--(4.1,0); 
\draw (1.65,.5)--(4.1,0); 
\draw (0,.5)--(0,-.5);
\draw (0,.5)--(1.65,.5);
\draw (1.65,.5)--(1.65,-.5); 
\draw (1.65,-.5)--(0,-.5)  node[midway, below]{$a$};
\end{tikzpicture}
\begin{tikzpicture}
\draw[help lines, color=gray!30, dashed] (-0.5,-1.5) grid (4.9,2.9);
\draw[->, thick] (-.5,0)--(5,0) node[right]{$x$};
\draw[->, thick] (0,-1.5)--(0,3) node[above]{$y$};

\draw (0,.5)--(.303,1.96) node[right]{$(1/(2a),x/a-1/2)$};
\draw (0,-.5)--(.303,1.96); 
\draw (.606,-.5)--(.303,1.96); 
\draw (.606,.5)--(.303,1.96); 
\draw (0,.5)--(0,-.5);
\draw (0,.5)--(.606,.5);
\draw (.606,.5)--(.606,-.5); 
\draw (.606,-.5)--(0,-.5)  node[midway, below]{$1/a$};
\end{tikzpicture}
\caption{ If $(x,0)$ is rational distance from $\Ra$, then $(1/(2a),x/a-1/2)$ is rational distance from $\mathcal{R}_{1/a}$. } \label{fig:shiftrotate}  
\end{figure}
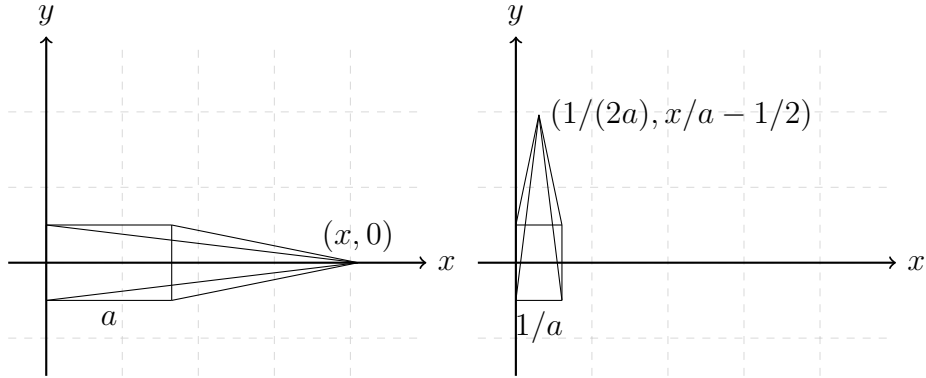

We have shown that all the points rational distance from $\Ra$ are of the form $(x, 0).$ To eliminate the points $(x,0)$, note that then $(1/(2a),x/a-1/2)$ is rational distance from $\mathcal{R}_{1/a}$. (See Figure \ref{fig:shiftrotate}.) Now $v_3(1/a)=0$, $v_3(1/(2a))=0$, and $v_3(x/a-1/2)\geq 0$, so our proof so far shows $x/a-1/2=0$ and thus $x=a/2$. 

We are left with just one possible point rational distance from $\Ra$, which is $(a/2,0)$. This would imply that $a,1$ are legs of a Pythagorean triangle. We can write $a=\frac{r}{s}$ where $r$ and $s$ are coprime and form the legs of a primitive Pythagorean triple. But exactly one of the legs of a primitive Pythagorean triple is divisible by three, which is not possible since $v_3(a)=0.$ Thus we must have had $v_3(x)<0$ or $v_3(z)<0$ from the beginning. \end{proof}

At this point, one can try to generalize the method in Proposition \ref{prop:v3hv3z<0}. One way is to clear powers of 3 out of the denominators of $x$ and $z$. This approach yields partial results.

\begin{prop} \label{prop:v3hneqv3z}
If $(x,z/2)$ is rational distance from $\Ra$, then $v_3(x) \neq v_3(z)$.
\end{prop}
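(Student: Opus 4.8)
The plan is to argue by contradiction: assume $v_3(x) = v_3(z) =: m$ and split on the sign of $m$. When $m \geq 0$ (including the degenerate case $x = z = 0$, where $m = \infty$) the point $(x,z/2)$ has both $v_3(x) \geq 0$ and $v_3(z) \geq 0$, which contradicts Proposition \ref{prop:v3hv3z<0} outright. So the entire content of the statement lives in the case $m = -e$ with $e \geq 1$, where $x$ and $z$ each carry exactly $e$ powers of $3$ in their denominators. This is precisely the situation one attacks by clearing those powers of $3$: I would set $X = 3^e x$ and $Z = 3^e z$, which are then $3$-adic units, and reduce the first equation of Proposition \ref{prop:fdpsyst} modulo $3$ after multiplying out the denominators.

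For $m = -e$, first I would extract a rational root of $p_{x,z}$ of the right valuation. Since $(x,z/2)$ is rational distance from $\Ra$, the distances $R_1, R_2$ to $(0,\pm 1/2)$ are rational, so by Theorem \ref{phzu} both $u_1 = R_1 + R_2$ and $u_2 = R_1 - R_2$ are nonzero rational roots of $p_{x,z}$ (nonzero because $z \neq 0$ forces $R_1 \neq R_2$). Viewing $p_{x,z}$ as a quadratic in $u^2$, the two values $u_1^2, u_2^2$ satisfy $u_1^2 u_2^2 = z^2$ and $u_1^2 + u_2^2 = z^2 + 4x^2 + 1$. The hard part is the valuation bookkeeping here: the key claim is $v_3(z^2 + 4x^2 + 1) = -2e$, which holds because the leading $3$-adic term $Z^2 + 4X^2 \equiv 1 + 1 \equiv 2 \pmod 3$ does not vanish (so there is no cancellation and the $+1$ is negligible against $-2e < 0$). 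Combined with $v_3(z^2) = -2e$, this forces the valuations of $u_1^2$ and $u_2^2$ to be exactly $-2e$ and $0$: they cannot be equal, since equal valuations would both be $-e$ and give $v_3(u_1^2 + u_2^2) \geq -e > -2e$. Hence one of the two roots, say $u$, has $v_3(u) = -e$.

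Finally I would substitute this root into $p_{x,z}(u) = 0$ with denominators cleared. Writing $U = 3^e u$ (a unit) and multiplying through by $3^{4e}$ turns the equation into
\[ U^4 - (Z^2 + 4X^2 + 3^{2e})U^2 + 3^{2e}Z^2 = 0. \]
Reducing modulo $3$, and using $e \geq 1$ to kill the two terms carrying a factor $3^{2e}$, leaves $U^4 \equiv (Z^2 + 4X^2)U^2 \pmod 3$; since $U, X, Z$ are units this reads $1 \equiv 2 \pmod 3$, the desired contradiction. The genuinely new point beyond Proposition \ref{prop:v3hv3z<0} — and the step I expect to be the main obstacle — is the valuation split of the previous paragraph: one must guarantee a nonzero rational root of valuation \emph{exactly} $-e$, and this is exactly where it matters both that $(x,z/2)$ is a true rational-distance point (so that $R_1 \pm R_2$ are \emph{both} rational roots, not just one) and that $Z^2 + 4X^2 \not\equiv 0 \pmod 3$ (so that the pair of $u^2$-roots splits as $\{-2e, 0\}$ rather than $\{-e,-e\}$).
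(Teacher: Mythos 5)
Your proof is correct and follows essentially the same route as the paper: reduce to the case $v_3(x)=v_3(z)=-e<0$ via Proposition \ref{prop:v3hv3z<0}, produce a nonzero rational root $u$ of $p_{x,z}$ with $v_3(u)=-e$ exactly, clear denominators by $3^{4e}$, and reduce modulo $3$ to the contradiction $1-(1+1)\equiv 0 \pmod{3}$. The only difference is how the root of exact valuation $-e$ is obtained — the paper reads $v_3(R_1)=v_3(R_2)=-e$ off the distance equations and picks the right element of $\{R_1\pm R_2\}$ by the ultrametric inequality, while you get the valuation split $\{-2e,0\}$ of the two $u^2$-roots from Vieta's formulas — a slightly more explicit justification of the same step.
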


\begin{proof}
In Proposition \ref{prop:v3hv3z<0}, we saw $v_3(x)<0$ or $v_3(z) < 0$. If also $v_3(x)=v_3(z)=k$, then consider the distances $R_1$ and $R_2$, which satisfy 
\[x^2 + (z/2 - 1/2)^2 = R_1^2\]
\[x^2 + (z/2 + 1/2)^2 = R_2^2.\]

Since $k$ is negative, this implies $v_3(R_1) = v_3(R_2) = k$. We can then choose $u$ from $R_1 \pm R_2$ such that $u$ is nonzero and $v_3(u)=k$. By Theorem \ref{phzu} and Corollary \ref{cor:ratdistpoly}, we get a solution with $u$ nonzero to \[u^4-(z^2+4x^2+1)u^2+z^2=0.\]
Write $ \square' = 3^k  \square$ so that $v_3(\square') = 0$ for $ \square=u,x,z$. Then multiplying both sides by $3^{4k}$, we get
\[{u'}^4-({z'}^2+4{x'}^2 + 3^{2k}){u'}^2+3^{2k}{z'}^2 = 0,\] so \[{u'}^4-({z'}^2+4{x'}^2){u'}^2 \equiv 0 \pmod{3}.\]
None of the variables are $0 \pmod{3}$ by assumption, so the lefthand side is just $-1 \pmod{3}$. This proves $v_3(x) \neq v_3(z).$
\end{proof}

We will continue the discussion of generalizing Proposition \ref{prop:v3hv3z<0} after recording our results so far. In sum, we have proved the following theorem.

\begin{thm} \label{thm:congruencecondition}
Suppose that $(x,z/2)$ is rational distance from $\Ra$, where $a$ is rational with $v_3(a) = 0$. Then $v_3(x)<0$ or $v_3(z)<0$, and also $v_3(x) \neq v_3(z).$
\end{thm}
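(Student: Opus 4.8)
The plan is to combine the two propositions that precede the theorem, since the final statement is exactly the conjunction of Proposition \ref{prop:v3hv3z<0} and Proposition \ref{prop:v3hneqv3z}. The entire content of the theorem has already been established; the theorem is a summary, so I would structure the argument as a brief assembly of these two ingredients rather than a fresh attack.

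First I would invoke Proposition \ref{prop:v3hv3z<0}, which tells us directly that a point $(x,z/2)$ rational distance from $\Ra$ with $v_3(a)=0$ satisfies $v_3(x)<0$ or $v_3(z)<0$. This gives the first disjunction verbatim and requires no further work. The only subtlety worth noting is that this proposition is where the real content lives: the inductive descent showing $z^2\equiv 0\pmod{3^k}$ for all $k$ forces $z=0$, and then the shift-and-rotate trick (sending $(x,0)$ rational distance from $\Ra$ to $(1/(2a),x/a-1/2)$ rational distance from $\mathcal{R}_{1/a}$) together with the primitive-Pythagorean-triple parity argument eliminates the remaining exceptional points. I would treat all of this as a black box supplied by the earlier proposition.

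Next I would invoke Proposition \ref{prop:v3hneqv3z} to obtain $v_3(x)\neq v_3(z)$. The key mechanism there is to clear the common power of $3$ from the denominators: writing $\square'=3^k\square$ so that each scaled variable is a $3$-adic unit, one reduces the two-distance polynomial $u^4-(z^2+4x^2+1)u^2+z^2=0$ modulo $3$ after multiplying through by $3^{4k}$, and the surviving congruence $-1\equiv 0\pmod 3$ is the contradiction. I would simply cite this conclusion.

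The main (and essentially only) obstacle is bookkeeping rather than mathematics: one must confirm that the hypotheses of the two propositions match the hypotheses of the theorem exactly. Both earlier propositions assume $(x,z/2)$ is rational distance from $\Ra$; Proposition \ref{prop:v3hv3z<0} additionally requires $v_3(a)=0$, which the theorem also assumes, while Proposition \ref{prop:v3hneqv3z} as stated does not mention $v_3(a)=0$ explicitly but relies on Proposition \ref{prop:v3hv3z<0} internally, so its validity is already predicated on that same hypothesis. Hence under the theorem's standing assumption $v_3(a)=0$ both conclusions apply simultaneously to the same point, and concatenating them yields the stated result. I expect the proof to be a single short paragraph of the form ``This is immediate from Propositions \ref{prop:v3hv3z<0} and \ref{prop:v3hneqv3z}.''
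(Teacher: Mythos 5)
Your proposal matches the paper exactly: the theorem is stated there as a summary (``In sum, we have proved the following theorem''), with its content being precisely the conjunction of Propositions \ref{prop:v3hv3z<0} and \ref{prop:v3hneqv3z}. Your bookkeeping observation is also accurate --- Proposition \ref{prop:v3hneqv3z} omits $v_3(a)=0$ from its statement but its proof invokes Proposition \ref{prop:v3hv3z<0}, so it tacitly carries that hypothesis, which the theorem's assumption supplies.
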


If $(x,z/2)$ is a solution to the two-distance problem, then $(x,z/2)$ is rational distance from $\mathcal{R}_x$ since $z$ is rational. So we get the following result for the two-distance problem.
\begin{cor} \label{cor:v3<0fortriangle}
Suppose that $(x,z/2)$ is a solution to the two-distance problem. If $v_3(x)=0$, then $v_3(z)<0$.
\end{cor}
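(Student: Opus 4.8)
The plan is to recognize the two-distance solution as a disguised instance of the four-distance problem for a cleverly chosen rectangle, and then invoke Theorem~\ref{thm:congruencecondition} with essentially no extra work. The crucial idea is to take the rectangle width to be $a = x$ itself, exploiting the observation recorded just above the corollary.

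First I would verify that $(x, z/2)$ is rational distance from $\mathcal{R}_x = \{(0, \pm 1/2), (x, \pm 1/2)\}$. By hypothesis $(x, z/2)$ is rational distance from $(0, \pm 1/2)$, which handles two of the four vertices. For the other two, note that by Lemma~\ref{lem:yrational} the coordinate $z/2$, and hence $z$, is rational; the distances from $(x, z/2)$ to $(x, \pm 1/2)$ are then $|z/2 \mp 1/2|$, which are rational. So all four distances are rational and $(x, z/2)$ is rational distance from $\mathcal{R}_x$.

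Next I would apply Theorem~\ref{thm:congruencecondition} with the rectangle parameter set to $a = x$. The theorem requires $v_3(a) = 0$; here $a = x$ and the corollary assumes exactly $v_3(x) = 0$, so the hypothesis holds. The theorem then delivers $v_3(x) < 0$ or $v_3(z) < 0$. Since $v_3(x) = 0$ forbids the first disjunct, we are forced into $v_3(z) < 0$, which is the claim.

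There is no genuine obstacle here, since the statement is an immediate consequence of the preceding theorem; the only step worth flagging is the self-referential substitution $a = x$, where the very quantity being constrained also serves as the rectangle width. The point is that a two-distance solution automatically furnishes two additional rational distances for free, namely the horizontal ones to $(x, \pm 1/2)$, so the reduction to the already-established congruence condition needs no further computation.
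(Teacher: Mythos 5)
Your proof is correct and is exactly the paper's argument: the remark immediately preceding the corollary notes that a two-distance solution $(x,z/2)$ is rational distance from $\mathcal{R}_x$ (the two extra distances $|z/2 \mp 1/2|$ being rational since $z$ is rational by Lemma \ref{lem:yrational}), and Theorem \ref{thm:congruencecondition} is then applied with $a = x$. The hypothesis $v_3(x)=0$ both licenses that application (it presupposes $x$ rational and gives $v_3(a)=0$) and excludes the disjunct $v_3(x)<0$, forcing $v_3(z)<0$, precisely as you argue.
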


\begin{rem}
The properties of Theorem \ref{thm:congruencecondition} are specific to the four-distance problem. Using one of the parametrizations for the three-distance problem from Berry's paper \cite{Berry}, we generated the point $(x,z/2) = (6493/28900,12463/14450)$ that is rational distance from $(0, 1/2), (1, \pm 1/2)$, and this point has $v_3(x)=v_3(z)=0$.
\end{rem}

Theorem \ref{thm:congruencecondition} says that it is somehow more difficult to solve the four-distance problem for the rectangle of side lengths $1,a$ if $v_3(a)=0$. Some results of Bremner and Ulas in \cite{BremnerUlas} support this observation. Bremner and Ulas considered the set of rational $a$ with infinitely many points rational distance from $\Ra$. They proved that this set is dense in $\mathbb{R}$ and, using results from Shute and Yocom, they showed that this set remains dense even if the rational distance points are constrained to lie in the interior of the rectangle. In proving this, they remarked on the interesting property that all the $a$ they found were of the form $a = 2t/(1-t^2)$ or $(1-t^2)/(2t)$. They were then interested in finding $a$ not of this form. To do so, they ran a small computer search. After discarding all of the points on the lines $x = 0$, $x=a/2$, $x=a$, $y=-1/2$, $y=0$, $y = 1/2$, the $a$ that remained all satisfied $v_3(a) \neq 0$. 

Notice that also $v_3(2t/(1-t^2))$ is nonzero for any $t$ since $2t/(1-t^2)$ is the ratio of two legs of a Pythagorean triple. The result of Shute and Yocom used by Bremner and Ulas does not help to find a counterexample either. Shute and Yocom's result gives a point in the interior of the rectangle with $a = A/B$, where $p_i, q_i$, and $r_i$ are Pythagorean triples for $i=1,2$ and $A$ and $B$ are defined by $A = p_1q_2+p_2q_1, B = p_1p_2+q_1q_2$. All of these $a$ also satisfy $v_3(a) \neq 0.$ Because of this evidence, we previously posed the following question.

\begin{quest}
Suppose that $v_3(a) = 0$. Is it possible to find a point $(x,y)$ with $x \neq 0,a/2,a$ and $y \neq 0,\pm 1/2$ that is rational distance from $\Ra$? If so, is there an $a$ with infinitely many such points $(x,y)$?
\end{quest}

A small computer search found the solution $(-8/13,-25/78)$ for $a=4/11,$ which answers the first question affirmatively.

\section{Further Discussion}

We now return to generalizing the proof of Proposition \ref{prop:v3hv3z<0}, again assuming $v_3(a) = 0.$ The difficulty is the case $v_3(x) \neq v_3(z).$ By a transformation in the manner of Figure \ref{fig:shiftrotate}, we can narrow the problem down to the case $v_3(z)<v_3(x)$. Let $k=v_3(z)$, so that $k$ is negative. Recall that our system of equations is \begin{align}
u^4-(z^2+4x^2+1)u^2+z^2&=0 \\
\mu^4-(z^2+4(x-a)^2+1)\mu^2+z^2&=0,
\end{align} where we are looking for solutions in rationals with $u,\mu \neq 0$. Just as in Proposition \ref{prop:v3hneqv3z}, we can show that if a point $(x,z/2)$ is rational distance from $\Ra$, then there exists a solution $(x,z,u,\mu)$ to this system of equations with $v_3(u)=v_3(\mu)=k.$ Writing $\square' = 3^k  \square,$ so that $v_3(\square') = 0$ for $ \square=u,\mu,z$, we can multiply both sides by $3^{4k}$ to get \begin{align}
{u'}^4-({z'}^2+3^{2k}(4x^2+1)){u'}^2+3^{2k}{z'}^2&=0 \\
{\mu'}^4-({z'}^2+3^{2k}(4(x-a)^2+1)){\mu'}^2+3^{2k}{z'}^2&=0,
\end{align}

In Proposition \ref{prop:v3hv3z<0}, we succeeded because the solutions (mod $3^k$) were easy to understand in that always $z\equiv0$ (mod $3^k$). That is, the solutions in $3$-adic integers satisfy an extra equation $z=0$, and that allows us to show that there are no rational $3$-adic integer solutions. Ideally, we would like to understand the 3-adic integer solutions to (9) and (10) and similarly determine whether they can be rational. Unfortunately, we have been unsuccessful so far. One possible explanation is that the derivative matrices (mod 3) for the system (7),(8) are not full rank whereas the derivative matrices for the system (9),(10) are full rank. Indeed, the solutions (mod $3^{k+1}$) are obtained from the solutions (mod $3^k$) in the following manner. (The proof is analogous to that of Hensel's lemma.)

\begin{lem} \label{lem:Hensel's type lemma}
Suppose that $f_i \in \mathbb{Z}[X_1,\ldots,X_n].$ The solutions to $f_i(x)\equiv 0 \pmod{3^{k+1}}$ are obtained by setting $x_j=b_j+v_j3^k$, where $f_i(b)\equiv 0 \pmod{3^k}$ and $v$ satisfies \[Df_i(b)v \equiv -f_i(b)/3^k \pmod{3}.\]
\end{lem}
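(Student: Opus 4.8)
The plan is to establish this Hensel-type lifting lemma by a direct Taylor expansion of the polynomials $f_i$ around a known solution $b$ modulo $3^k$, working one prime $p=3$ at a time (though the argument is uniform in any prime). Since each $f_i$ has integer coefficients, I would first record the multivariable Taylor expansion with integer coefficients: for any $b, v \in \mathbb{Z}^n$ and any integer $t$,
\[
f_i(b + tv) = f_i(b) + t\, Df_i(b)\cdot v + t^2 Q_i(b,v),
\]
where $Df_i(b)$ is the gradient (row) vector of partial derivatives evaluated at $b$ and $Q_i(b,v) \in \mathbb{Z}$ collects all the higher-order terms. The key point making this decomposition valid over $\mathbb{Z}$ is that for polynomials with integer coefficients the mixed partials divided by the appropriate factorials remain integers on integer inputs (equivalently, one expands $(b_j + tv_j)^m$ by the binomial theorem and groups by powers of $t$), so no denominators are introduced.

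Next I would specialize to $t = 3^k$ and $x_j = b_j + v_j 3^k$, which gives
\[
f_i(b + 3^k v) = f_i(b) + 3^k\, Df_i(b)\cdot v + 3^{2k} Q_i(b,v).
\]
Assuming $f_i(b) \equiv 0 \pmod{3^k}$ so that $f_i(b)/3^k$ is an integer, I would divide the desired congruence $f_i(b+3^k v)\equiv 0 \pmod{3^{k+1}}$ through by $3^k$. Because $2k \ge k+1$ whenever $k \ge 1$, the term $3^{2k}Q_i(b,v)$ is divisible by $3^{k+1}$ and vanishes modulo $3^{k+1}$; dividing by $3^k$ then leaves
\[
\frac{f_i(b)}{3^k} + Df_i(b)\cdot v \equiv 0 \pmod{3},
\]
which is exactly the stated condition $Df_i(b)v \equiv -f_i(b)/3^k \pmod 3$. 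Conversely, any solution $x$ to $f_i(x) \equiv 0 \pmod{3^{k+1}}$ reduces modulo $3^k$ to some $b$ with $f_i(b)\equiv 0\pmod{3^k}$, and writing $x_j = b_j + v_j 3^k$ recovers the same linear congruence for $v$, so the description of the lift is both necessary and sufficient.

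The main obstacle here is a conceptual bookkeeping point rather than a hard estimate: one must be careful that the lemma describes all lifts simultaneously for the whole system $\{f_i\}$, not each $f_i$ in isolation, and that the representative $b$ is taken modulo $3^k$ while the increment vector $v$ is taken modulo $3$. I would emphasize that the displayed linear system over $\mathbb{F}_3$, with coefficient matrix the stacked Jacobian $Df(b) \bmod 3$ and right-hand side $-f(b)/3^k \bmod 3$, may have zero, one, or several solutions $v$, which is precisely why the rank of $Df(b) \bmod 3$ controls how the solution set propagates and connects to the full-rank remark preceding the lemma. Apart from verifying the inequality $2k \ge k+1$ (trivial for $k \ge 1$, and the $k=0$ base case handled directly), the proof is a routine unwinding of the Taylor formula, so I would not belabor the arithmetic.
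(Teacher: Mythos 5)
Your Taylor-expansion argument is correct and takes exactly the approach the paper itself intends: the paper gives no written proof of this lemma, remarking only that the proof is analogous to that of Hensel's lemma, and your write-up supplies precisely that standard argument, including the two essential points that the integer quadratic remainder $3^{2k}Q_i(b,v)$ is divisible by $3^{k+1}$ once $k \geq 1$, and that reducing an arbitrary solution mod $3^{k+1}$ back to a $b$ mod $3^k$ makes the description exhaustive. One small caveat, immaterial to the paper's use of the lemma (which only lifts from $3^k$ to $3^{k+1}$ with $k \geq 1$): at $k=0$ the quadratic term survives mod $3$ and the stated linear criterion is genuinely false, so rather than being ``handled directly'' that case should simply be excluded.
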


Thus, if the derivative matrix of the polynomial system $f(x)=0$, where $f=(f_1,\ldots,f_m)$, is not full rank$\pmod{3}$, there can be difficulty solving for $v.$ For the system (7),(8), we have a matrix $Df(x_0,z_0,u_0,\mu_0)$ given by

\[  \begin{bmatrix}
    -8x_0u_0^2 & -2z_0u_0^2+2z_0 & 4u_0^3-2u_0(z_0^2+4x_0^2+1) & 0 \\
    -8x_0\mu_0^2 & -2z_0\mu_0^2+2z_0 & 0 & 4\mu_0^3-2\mu_0^2(z_0^2-4(x_0-a)^2+1)
  \end{bmatrix}
\]

We already know from the proof of Proposition \ref{prop:v3hv3z<0} that $z_0 \equiv 0 \pmod{3}$. One can check this implies either $u_0\equiv 0 \pmod{3}$ or $\mu_0 \equiv 0 \pmod{3}$, which means all of the solutions $(x_0,z_0,u_0,\mu_0)$ (mod 3) have $Df(x_0,z_0,u_0,\mu_0)$ not full rank (mod 3). However, for the system (9),(10), the matrix $Df(x_0,{z'}_0,{u'}_0,{\mu'}_0)$ is
\[
  \begin{bmatrix}
    0 & {z'}_0 & -{u'}_0 & 0 \\
    0 & {z'}_0 & 0 & -{\mu'}_0
  \end{bmatrix}
\] which we arrive at using ${z'}_0^2 \equiv {u'}_0^2 \equiv {\mu'}_0^2 \equiv 1 \pmod{3}$. This matrix is full rank for all of the solutions $(x_0,{z'}_0,{u'}_0,{\mu'}_0) \pmod{3}$ because we are restricting ${z'}_0, {u'}_0,{\mu'}_0 \not\equiv 0 \pmod{3}$.

We get somewhat lucky in this respect when we translate our results to the three-and-one system of equations discussed at the beginning of Section 1. 
\begin{center}
\begin{tikzpicture}

\draw[help lines, color=gray!30, dashed] (-.5,-1.5) grid (4.9,2.9);
\coordinate (A) at (5/3,4/3-.5);
\draw[->, thick] (-.5,0)--(5,0) node[right]{$x$};
\draw[->, thick] (0,-1.5)--(0,3) node[above]{$y$};
\draw (0,.5)--(A) node[above]{$(x,y)$};
\draw (0,-.5)--(A);
\draw (1,.5)--(A);
\draw (1,-.5)--(A); 
\draw (0,.5)--(0,-.5);
\draw (0,.5)--(1,.5);
\draw (1,.5)--(1,-.5); 
\draw (1,-.5)--(0,-.5)  node[midway, below]{$1$};
\end{tikzpicture}
\end{center}

Indeed, Theorem \ref{thm:congruencecondition} says that if $(x,y)$ solves the four-distance problem, then $v_3(x)<0$ or $v_3(y) < 0$, as well as $v_3(x) \neq v_3(y)$. This implies that $v_3(R_i)= $ min$\{v_3(x),v_3(z)\}$ so $v_3(R_i)<0$, where $R_i$ is the distance of $(x,y)$ to any vertex of the square. Thus, by scaling, we see that in the three-and-one system discussed in Section 1
\bigskip
\begin{center}
\begin{tikzpicture}[scale=.8]
\draw (0,0) -- (0,3) node[midway, left]{$T$};
\draw (0,3) -- (3,3);
\draw (3,3) -- (3,0);
\draw (0,0) -- (3,0);
\draw (0,0) -- (5,4) node[midway,left]{$U$};
\draw (3,0) -- (5,4) node[midway,right]{$Z$};
\draw (0,3) -- (5,4) node[midway,above]{$X$};
\draw (3,3) -- (5,4) node[midway,left]{$Y$};
\end{tikzpicture}
\end{center}
\begin{align*}
\tag{1 revisited} 2(Y^4+T^4)+X^4+Z^4&=2(X^2+Z^2)(Y^2+T^2)  \\
\tag{2 revisited} U^2+Y^2&= X^2+Z^2
\end{align*} we have $v_3(X)=v_3(Y)=v_3(Z)=v_3(U)<v_3(T)$.
\begin{prop} \label{prop:guyequationsmod3}
If $X,Y,Z,T,U$ solve the four-distance problem, then $v_3(X)=v_3(Y)=v_3(Z)=v_3(U)<v_3(T)$. In particular, if $X,Y,Z,T,U$ is chosen to be a primitive solution, then $X,Y,Z,U$ are nonzero$\pmod{3}$ and $T$ is zero$\pmod{3}$.
\end{prop}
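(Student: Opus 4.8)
The plan is to transport the valuation data of Theorem~\ref{thm:congruencecondition} from the point $(x,y)$ to the four distances it determines, and then to use the homogeneity of the system (1),(2) to read off the behaviour of a primitive solution modulo $3$. Throughout write $y=z/2$, so $v_3(z)=v_3(y)$, and place the unit square at $(0,\pm1/2),(1,\pm1/2)$ as before, so its side length satisfies $v_3(T)=v_3(1)=0$. By Theorem~\ref{thm:congruencecondition}, a four-distance point $(x,y)$ has $v_3(x)\neq v_3(y)$ with at least one of the two valuations negative; setting $m=\min\{v_3(x),v_3(y)\}$, this is exactly the statement that $m<0$ and that $v_3(x)$ and $v_3(y)$ are distinct.

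First I would compute $v_3$ of each distance $R_1,R_2,R_3,R_4$ from $(x,y)$ to the corners and show it equals $m$ in every case. Each squared distance has the shape
\[
R^2=(x-c)^2+\bigl(y-\tfrac{\epsilon}{2}\bigr)^2=(x^2+y^2)-2cx-\epsilon y+\bigl(c^2+\tfrac14\bigr),
\qquad c\in\{0,1\},\ \epsilon\in\{\pm1\}.
\]
The decisive point is that, because $v_3(x)\neq v_3(y)$, the squares have distinct valuations $2v_3(x)\neq 2v_3(y)$, whence $v_3(x^2+y^2)=2m$ with equality. Every other summand has valuation at least $m$: one has $v_3(2cx)\geq v_3(x)\geq m$, $v_3(\epsilon y)=v_3(y)\geq m$, and $v_3(c^2+\tfrac14)\geq0$. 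Since $m<0$ we have $2m<m$, so the term $x^2+y^2$ strictly dominates and $v_3(R^2)=2m$, giving $v_3(R)=m$ for each corner. Hence $v_3(R_1)=v_3(R_2)=v_3(R_3)=v_3(R_4)=m<0=v_3(T)$.

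Next I would note that the four distances, in some order, together with $T=1$ constitute a solution $(X,Y,Z,T,U)$ of the three-and-one system (1),(2), and conversely that any solution with $T\neq0$ comes from a unit-square point upon dividing by $T$. As (1) is homogeneous of degree $4$ and (2) of degree $2$, the substitution $(X,Y,Z,T,U)\mapsto\lambda(X,Y,Z,T,U)$ preserves solutions and shifts every valuation by $v_3(\lambda)$. The equalities and inequality just obtained therefore hold after this common shift, which is precisely the scale-invariant conclusion $v_3(X)=v_3(Y)=v_3(Z)=v_3(U)<v_3(T)$.

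For the primitive case, recall that a primitive integer solution satisfies $\min_i v_3(\text{entry}_i)=0$. Since $X,Y,Z,U$ share the common valuation $v_3(X)$ and this is strictly smaller than $v_3(T)$, the minimum is attained by $X,Y,Z,U$; forcing it to be $0$ yields $v_3(X)=v_3(Y)=v_3(Z)=v_3(U)=0$ and $v_3(T)>0$, i.e.\ $X,Y,Z,U\not\equiv0\pmod3$ and $T\equiv0\pmod3$. The only step requiring care is the valuation bookkeeping of the second paragraph: one must verify that none of the linear or constant terms can reach valuation $2m$, which holds solely because $m<0$, and that $v_3(x^2+y^2)=2m$ is an equality, which is exactly where the distinctness $v_3(x)\neq v_3(y)$ supplied by Theorem~\ref{thm:congruencecondition} is used. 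The remainder is formal, so I expect no essential obstacle beyond this translation.
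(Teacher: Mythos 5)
Your proposal is correct and takes essentially the same approach as the paper: the paper likewise deduces from Theorem \ref{thm:congruencecondition} that $v_3(R_i)=\min\{v_3(x),v_3(y)\}<0$ while the unit square has $v_3(T)=0$, then transfers this to arbitrary solutions by scaling and to primitive ones via the gcd condition. The only difference is that you spell out the ultrametric computation showing $v_3(R^2)=2\min\{v_3(x),v_3(y)\}$ (using $m<0$ and $v_3(x)\neq v_3(y)$), which the paper asserts without detail.
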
 
The derivative matrix of system (1),(2) is $Df(X_0,Y_0,Z_0,T_0,U_0)$ given by

\[
  \begin{bmatrix}
    a_{11} & a_{12} & a_{13} & a_{14} & 0 \\
    2Y_0 & -2X_0 & -2Z_0 & T_0 & 2U_0
  \end{bmatrix}
\]
where
\[a_{11} = 8Y_0^3-4(X_0^2+Z_0^2)Y_0\]
\[a_{12} = 4X_0^3-4(Y_0^2+T_0^2)X_0\]
\[a_{13} = 4Z_0^3-4(Y_0^2+T_0^2)Z_0\]
\[a_{14} = 8T_0^3-4(X_0^2+Z_0^2)T_0.\]

There are multiple solutions (mod 3) to (1),(2) where this matrix is full rank. However, by Proposition \ref{prop:guyequationsmod3}, we know that any solution to the four-distance problem has $X_0^2\equiv Y_0^2 \equiv Z_0^2 \equiv U_0^2 \equiv 1 \pmod{3}$ and $T_0 \equiv 0 \pmod{3}$. For each of these solutions, the entries in the first row of the matrix are all 0 (mod 3). Perhaps the solutions (mod $3^k$) coming from the four-distance problem can be understood because their derivative matrices are not full rank, just as in the system (7),(8). 

We observed some erratic behavior of the number of solutions (mod $3^k$) to the system (1),(2). Consider a solution $(X_0,Y_0,Z_0,U_0,T_0) \pmod{3^k}$ corresponding to the four-distance problem in the sense of Proposition \ref{prop:guyequationsmod3}. If this solution lifts to a single solution (mod $3^{k+1}$), then by Lemma \ref{lem:Hensel's type lemma} it lifts to a total of 81 solutions (mod $3^{k+1}$), since this is the number of points in the nullspace of the rank 1 matrix $Df(X_0,Y_0,Z_0,U_0,T_0)$ (mod 3). However, when we count number of solutions (mod $3^k$) ($k=1,\ldots, 5)$ corresponding to four-distance problem solutions, we get the somewhat surprising sequence 16, 1296, 34992, 1154736, 31177872 with successive quotients $81,27,33,$ and $27$. We have verified the existence of solutions modulo powers of $3$ up to $3^{100}$, but unfortunately we have been unsuccessful in gaining understanding of these solutions.

Finally, we rule out some parametrizations to the three-distance problem. Berry's paper \cite{Berry} listed some such parametrizations, including a quadratic and a quartic parametrization. Berry proved that up to symmetry there are no more degree 2 and degree 4 parametrizations to the three-distance problem, and he proved that these parametrizations do not solve the four-distance problem. Using our results, we can prove that the octic parametrization listed in his paper does not solve the four-distance problem.

\begin{thm}
The octic parametrization for the three-distance problem
\begin{align*}
    X &= t^8-8t^7+12t^6+24t^5-10t^4-24t^3+12t^2+8t+1 \\
    Y &= 8t^7-16t^6-8t^5-8t^3+16t^2+8t\\
    Z &= t^8 + 12t^6 - 32t^5-10t^4-32t^3+12t^2+1\\
    T &= t^8-4t^6+22t^4-4t^2+1
\end{align*}

contains no solutions to the four-distance problem.
\end{thm}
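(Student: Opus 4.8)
The plan is to apply Proposition~\ref{prop:guyequationsmod3}, which states that any solution $(X,Y,Z,T,U)$ to the four-distance problem must satisfy $v_3(T) > v_3(X) = v_3(Y) = v_3(Z) = v_3(U)$; equivalently, for a primitive solution we have $T \equiv 0 \pmod 3$ while $X, Y, Z, U$ are all nonzero $\pmod 3$. The given octic parametrization produces $X, Y, Z, T$ as integer polynomials in a rational parameter $t$. Since the four-distance problem additionally requires a rational distance $U$ with $U^2 = X^2 + Z^2 - Y^2$ (the fourth distance, by equation~(2)), the strategy is to show that the congruence constraint $T \equiv 0 \pmod 3$ forces $X, Y, Z, U$ to violate the condition of being nonzero $\pmod 3$, for \emph{every} rational $t$. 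Thus no value of the parameter can yield a genuine four-distance solution.

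First I would reduce the four polynomials modulo $3$, treating $t$ as a variable in $\mathbb{F}_3$. Reducing the coefficients, one finds the simplified forms of $X, Y, Z, T \pmod 3$ as polynomials in $t$. A rational parameter $t = m/n$ in lowest terms contributes to a primitive solution after clearing denominators by $n^8$; the reduction $\pmod 3$ then depends only on $t \pmod 3$ (for $n \not\equiv 0$) together with the behavior at $n \equiv 0 \pmod 3$, so it suffices to check the three residues $t \equiv 0, 1, 2 \pmod 3$ and the ``point at infinity'' case $n \equiv 0$. For each case I would compute $T \pmod 3$: the claim is that whenever $T \equiv 0 \pmod 3$, at least one of $X, Y, Z \pmod 3$ is also forced to be $0$, or else the auxiliary quantity $U^2 \equiv X^2 + Z^2 - Y^2 \pmod 3$ fails to be a nonzero square. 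I would organize this as a finite case check over $t \in \mathbb{F}_3$, recording the triple $(X, Y, Z, T) \pmod 3$ and then $U^2 \pmod 3$ in each case.

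The key arithmetic observation is that $X, Y, Z, T$ modulo $3$ each factor or simplify substantially: using Fermat's little theorem $t^3 \equiv t$ and $t^8 \equiv t^2 \pmod 3$, the octic polynomials collapse to low-degree polynomials in $t$, making the evaluation at the residues $0, 1, 2$ immediate. I would then verify directly that in precisely those cases where $T \equiv 0 \pmod 3$, the primitive normalization of $(X, Y, Z, U)$ contradicts the requirement that all four be nonzero $\pmod 3$ established in Proposition~\ref{prop:guyequationsmod3}. Since by that proposition a primitive four-distance solution \emph{must} have $T \equiv 0$ and $X, Y, Z, U \not\equiv 0 \pmod 3$ simultaneously, exhibiting that the parametrization never meets both conditions at once completes the argument.

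The main obstacle will be handling the primitivity and scaling carefully: the parametrization gives values for a fixed $t$, but a four-distance solution is only defined up to a common rational scaling, so I must ensure the $3$-adic valuations are compared correctly rather than the raw residues. Concretely, if all four of $X, Y, Z, T$ happen to share a common factor of $3$ for some $t$, I must divide out before reading off residues, since Proposition~\ref{prop:guyequationsmod3} is a statement about \emph{primitive} solutions (equivalently about the valuations $v_3$). I would address this by computing $v_3$ of each polynomial as a function of $v_3(t)$ and of $t \pmod 3$, rather than naively reducing, so that the comparison $v_3(T) > v_3(X) = v_3(Y) = v_3(Z) = v_3(U)$ can be tested directly; the finiteness of the residue check guarantees that this analysis terminates.
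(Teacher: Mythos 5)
Your plan is correct and essentially matches the paper's proof: both rest on the valuation constraint of Proposition~\ref{prop:guyequationsmod3} together with a finite $3$-adic check of the parametrization, split into the case $v_3(t)<0$ (your ``point at infinity,'' where the leading $t^8$ terms force $v_3(T)=8v_3(t)=v_3(Z)$) and the case $v_3(t)\geq 0$ (where reduction mod $3$ shows $T\equiv 1 \pmod{3}$, hence $v_3(T)=0\leq v_3(Z)$), each contradicting $v_3(Z)<v_3(T)$. The only differences are cosmetic: the contradiction you anticipate (``whenever $T\equiv 0\pmod 3$, some other quantity degenerates'') never actually arises, since your own residue check would reveal that $T$ is \emph{never} divisible by $3$ when $v_3(t)\geq 0$, and the auxiliary computation of $U^2=X^2+Z^2-Y^2$ is unnecessary, as the paper obtains the contradiction from $Z$ and $T$ alone.
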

\begin{proof}
If $v_3(t)<0$, then $v_3(T)=8v_3(t)=v_3(Z)$, but we have shown $v_3(Z) < v_3(T).$ If $v_3(t) \geq 0$, then $v_3(Z) \geq 0$. However, the parametrization implies $T \equiv 1 \pmod{3}$, so $v_3(T) = 0$, which is again contrary to $v_3(Z) < v_3(T).$
\end{proof}
It should be noted that we are unable to use this method to rule out the quadratic, quartic, and sextic parametrizations listed in Berry's paper. And while there is only one quadratic parametrization and only one quartic parametrization, there are many octic and sextic parametrizations that Berry showed how to construct but did not list in his paper. We did not generate these parametrizations, though there are likely more we could rule out with the same method.

\section*{Acknowledgements}
This research was conducted as part of an undergraduate honors thesis at Stanford University. I would like to thank my thesis advisor Professor Vakil for all the time and attention he invested into this project in addition to his indispensable guidance throughout. I would also like to thank Professor Soundararajan and Professor Tsai for multiple helpful comments and conversations. Finally, I would like to thank Stanford University for supporting this research with the UAR Major Grant.

\end{document}